\newcommand{\bt}{\begin{theorem}}
\newcommand{\et}{\end{theorem}}
\newtheorem{theorem}{Theorem}[section]
\newtheorem{lemma}[theorem]{Lemma}
\newtheorem{proposition}[theorem]{Proposition}
\newtheorem{corollary}[theorem]{Corollary}
\newtheorem{example}[theorem]{Example}
\newtheorem{remark}[theorem]{Remark}
\newtheorem{definition}[theorem]{Definition}
\newcommand{\T}{\mathrm}
\newcommand {\be}{\begin{equation}}
\newcommand {\ee}{\end{equation}}
\def \textit{\it}
\def \bt{\begin{theorem}}
\def \et{\end{theorem}}
\def \bl{\begin{lemma}}
\def \el{\end{lemma}}
\def \bc{\begin{corollary}}
\def \ec{\end{corollary}}
\def \be{\begin{equation}}
\def \ee{\end{equation}}
\def \text{\mbox}
\title{Kirchhoff index and  degree Kirchhof index of  complete multipartite graphs}
\author[R.B. Bapat, R M. Karimi, J.B. Liu]{Ravindra B. Bapat$^{1}$,  Masoud Karimi$^{*,2}$, Jia\textrm{-}Bao  Liu$^{3}$}
\date{}
\begin{document}
\thanks{$^*$Corresponding Author}
\maketitle
\begin{center}
$^1$Indian Statistical Institute,\\ Delhi Centre, 7 S.J.S.S. Marg, \\  New Delhi 110 016, India\\
 {rbb@isid.ac.in}

 $^{2}$ School of mathematical sciences, Anhui university 
  \\Hefei, China \\
 
 $^{2}$Department of Mathematics,
  Bojnourd Branch,\\ Islamic Azad University, Bojnourd, Iran \\
  karimimth@bojnourdiau.ac.ir
  
$^{3}$School of Mathematics and Physics, Anhui Jianzhu University\\ Hefei, 230601, PR China\\
{ liujiabaoad@163.com}
\end{center}

\begin{abstract}
The  Kirchhoff index of a graph  is defined as half of the  sum of all effective resistance distances between any two vertices. Assuming a complete multipartite graph $G$, by methods from linear algebra   we explicitly formulate effective resistance distances between any two vertices of $G$, and  its Kirchhoff index. In rest of paper we explore extremal value of Kirchhoff index for multipartite graphs.  
\end{abstract}

\vspace{3mm}
\noindent {\em AMS Classification}:  05C50\\
\noindent{\em Keywords}: Resistance distance, Kirchhoff index, Complete multipartite.
\section{Introduction}
We consider simple graphs, that is, graphs without loops or parallel edges. For basic
notions in graph theory we refer to \cite{west}, whereas for preliminaries on
graphs and matrices, see \cite{bapat}.
 A graph $G$ is said to be a regular graph if all its vertices have the  same degree. The complete graph of order $p$ is denoted by $K_{p}.$
The disjoint union of graphs $G$ and  $H$ is denoted by $G\cup H$. The complement of $G $ is denoted by $\overline{G},$ which  is a simple graph such  that $G\cup \overline{G}$ is a complete graph. The complete join of  graphs $G$ and $H$ is denoted by $G\vee H$ which is a graph with vertex-set 
$V(G\vee H):=V(G)\cup V(H)$ and edge set 
$$E(G\vee H):=E(G)\cup E(H)\cup\lbrace uv~|~u\in V(G) , v\in V(H) \rbrace.$$ 
A complete $r$-partite graph is a complete join of $r$ empty graphs. We denote $K_{p_1}\vee\dots \vee K_{p_r}$ by $K_{p_1,\dots,p_r}$.
Also for convenience,   by  $K_{ \overline{p}^{a_1}_1,\overline{p}^{a_2}_2,\dots , \overline{p}^{a_s} _s}$  we denote a complete $r$-partite graph with   $a_i$ parts of size $\overline{p}_i$ with $\overline{p}_i< \overline{p}_{i+1}$, for $i=1,\dots, s$. Thus,  $a_1+\dots +a_s=r$ and $a_1\overline{p}_1+\dots +a_s\overline{p}_s=n$ is the number of vertices. Thus
 $K_{p_1,\dots,p_r} $ and $K_{ \overline{p}^{a_1}_1,\overline{p}^{a_2}_2,\dots , \overline{p}^{a_s} _s}$ denote the
same complete $r$-partite graph.

The Laplacian matrix of the graph $G,$ is denoted by $L(G),$ is  defined as $D-A$, where $D$ is the
diagonal matrix with degrees of vertices of $G$ on the diagonal and $A$ is the adjacency matrix of $G.$ 
We denote by $I_n$ and  $J_n$, respectively,  the $n \times n$ identity matrix and the $n \times n$matrix with all ones.
Let $A$ be an $n\times n$ matrix. We will use $I$ and $J$ if the order is clear
from the context.  If $T,S\subseteq\lbrace 1,\dots ,n \rbrace$, then $A[S\vert T]$ will denote the submatrix of $A$ indexed by the rows corresponding to $S$ and the columns corresponding to $T$. The submatrix obtained by deleting the rows in $S$ and  the columns in $T$ will be denoted by $A(S\vert T)$. When $S=\lbrace i\rbrace$ and $T=\lbrace j\rbrace,$ 
$A(S\vert T)$ is denoted by $A(i\vert j)$.
Direct sum of two matrices $A$ and $B$ is written as $A\oplus B$ which is a diagonal block matrix with $A$ and $B$  as block-diagonal entries. The sum of all diagonal entries of a matrix $A$ is denoted by $\T{trace}(A)$. If $A$ is an invertible matrix, by $A^{-1}$ we denote its inverse matrix. Also, by $A^{+}$ we denote the Moore-Penrose inverse of $A$.

We  recall the notion of equitable partition. Suppose $A$  is a symmetric matrix whose rows and columns are indexed by $\{1,\dots, n\}$. Let $X=\{X_1,\dots,X_s\}$ be a partition of $\{1,\dots, n\}$. By definition, $X$ is an equitable partition if 
 $A[X_i|X_j] {\bf 1}= b_{ij} {\bf 1}$ for $i,j=1,\dots, s$. Let  $H$ denote the $n\times s$ matrix whose  $j$-th
 column is the characteristic vector of $X_j,$ for $j=1,\dots , s.$ The $s \times s$
matrix  $B = ((b_{ij}))$   is called the quotient matrix of $A$ with respect to the  partition $X.$
 Then we have $AH=HB$,  and so eigenvalues of $A$ consist of the eigenvalues of $B,$ together with the eigenvalues belonging to the eigenvectors orthogonal to the columns of $H$ (i.e., summing to zero on each part of the partition); see \cite[page 24]{brouwer}.

\section{Resistance distance }
S.V. Gervacio in \cite{Ger}, by using methods and principals in electrical circuits such as
the  principle of elimination and the principle of substitution, explicitly expressed the effective resistance distance between any pair of vertices in the complete multipartite graph.
In this section, we employ methods from linear algebra to obtain the expression. In the process we prove a result
(Theorem \ref{tm1}) which is of independent interest.
\begin{lemma}\label{1}
Suppose that $\tau_n'=[a_1,\dots,a_n]$ and $\sigma_n'=[b_1,\dots,b_n]$ are vectors. If 
 $c_i\neq 0$ for $i=1,\dots, n,$ we have 
$$\T{det}(\tau_n\sigma_n'+\T{diag}(c_1,\dots,c_n))=c_1c_2\dots c_n
\left (1+\frac{a_1b_1}{c_1}+\dots+\frac{a_nb_n}{c_n} \right ).$$
\end{lemma}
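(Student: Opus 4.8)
The plan is to prove the formula by exhibiting the matrix $M := \tau_n\sigma_n' + \mathrm{diag}(c_1,\dots,c_n)$ as a rank-one perturbation of an invertible diagonal matrix and invoking the matrix determinant lemma (Sherman--Morrison for determinants). Concretely, write $D = \mathrm{diag}(c_1,\dots,c_n)$, which is invertible since each $c_i \neq 0$, with $D^{-1} = \mathrm{diag}(1/c_1,\dots,1/c_n)$ and $\det D = c_1 c_2 \cdots c_n$. Then $M = D + \tau_n \sigma_n'$, and the identity
$$\det(D + \tau_n\sigma_n') = \det(D)\,\bigl(1 + \sigma_n' D^{-1}\tau_n\bigr)$$
gives the result at once, because $\sigma_n' D^{-1}\tau_n = \sum_{i=1}^n b_i (1/c_i) a_i = \sum_{i=1}^n \frac{a_i b_i}{c_i}$, so the right-hand side is exactly $c_1\cdots c_n\left(1 + \frac{a_1 b_1}{c_1} + \cdots + \frac{a_n b_n}{c_n}\right)$.

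Since the paper may prefer a self-contained argument rather than citing the determinant lemma, I would alternatively prove it directly. First factor out $D$: $M = D\,(I + D^{-1}\tau_n\sigma_n')$, so $\det M = (c_1\cdots c_n)\det(I + u\sigma_n')$ where $u := D^{-1}\tau_n = [a_1/c_1,\dots,a_n/c_n]'$. It then suffices to show $\det(I + u v') = 1 + v'u$ for column vectors $u,v$ (here $v = \sigma_n$). This standard fact can be established in several ways: (i) the nonzero eigenvalues of the rank-one matrix $uv'$ are just $v'u$ (with eigenvector $u$, assuming $u\neq 0$; the degenerate case $u=0$ is trivial), and all other eigenvalues are $0$, so $\det(I+uv') = (1 + v'u)\cdot 1^{n-1}$; or (ii) the block-matrix/Schur-complement identity $\det\begin{pmatrix} I & -u \\ v' & 1\end{pmatrix}$ computed two ways yields $\det(I + uv') = 1 + v'u$; or (iii) a direct cofactor expansion. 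I would present route (ii) or (i) as the cleanest.

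The main (and essentially only) obstacle is simply deciding how much of this to take as known versus proving from scratch; there is no genuine difficulty in the mathematics. If a fully elementary proof is wanted, I would use the Schur-complement route: consider the $(n+1)\times(n+1)$ matrix $N = \begin{pmatrix} D & -\tau_n \\ \sigma_n' & 1 \end{pmatrix}$. Expanding along the last row/column with the Schur complement of the top-left block $D$ gives $\det N = \det(D)\bigl(1 + \sigma_n' D^{-1}\tau_n\bigr)$, while expanding with the Schur complement of the bottom-right $1\times 1$ block gives $\det N = \det(D + \tau_n\sigma_n')\cdot 1$. Equating the two expressions yields the claimed identity, and substituting $\sigma_n' D^{-1}\tau_n = \sum_i a_i b_i/c_i$ finishes the proof. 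One should also note explicitly that the formula as stated holds for all vectors $\tau_n,\sigma_n$ (no nonvanishing hypothesis on the $a_i,b_i$ is needed), the only hypothesis being $c_i \neq 0$, which is exactly what makes $D$ invertible.
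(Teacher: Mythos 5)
Your proposal is correct, and the Schur-complement route you settle on is essentially the paper's own argument: the paper evaluates the bordered $(n+1)\times(n+1)$ determinant $\left|\begin{smallmatrix} 1 & \tau_n' \\ -\sigma_n & B\end{smallmatrix}\right|$ in two ways, which is your matrix $N$ up to transposition and placement of the sign. The matrix determinant lemma and eigenvalue arguments you mention are valid alternatives but not what the paper does.
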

\begin{proof}
Put $ B:=\T{diag}(c_1,\dots,c_{n}) $. Then it is not hard to see that
\be\label{eq5}
\T{det}(\tau_{n}\sigma_n' +B)=\left|
  \begin{array}{cccccc}
1 & a_1&\dots &a_{n}\\
 -b_1&&&\\

 \vdots &&B&\\
 -b_n&&&

    \end{array}\right|=\T{det}(B)\left (1+\frac{a_1b_1}{c_1}+\dots+\frac{a_nb_n}{c_n} \right ),
\ee
and the proof is complete.
\end{proof}
\begin{theorem}\label{tm1}
Let $=K_{p_1,p_2,\dots,p_r}$ be a complete $r$-partite graph with $n$ vertices and with $V_1,\dots, V_r$ as its parts.  
Let $A \subset \{1, \ldots, n\}.$ Then  the eigenvalues  of $L(A|A)$ are the roots of the polynomial
\be\label{eq1}
\left (1+\sum_{k=1}^r\dfrac{t_k-p_k}{d_k+p_k-t_k-x}\right )\prod_{k=1}^r(d_k+p_k-t_k-x)(d_k-x)^{p_k-t_k-1},
\ee
where $t_i:=|A\cap V_i|$,  and $d_i$ is the degree of a vertex in $V_i$ for  $i=1,\dots , r$.
\end{theorem}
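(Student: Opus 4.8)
The plan is to exploit the block structure of $M:=L(A|A)$ together with the equitable-partition fact recalled in the Introduction, reducing the characteristic polynomial of $M$ to that of an $r\times r$ matrix to which Lemma \ref{1} applies directly. Write $V'=\{1,\dots,n\}\setminus A$ and $m_i:=p_i-t_i=|V'\cap V_i|$; parts with $m_i=0$ contribute trivially to (\ref{eq1}) (the factor $(d_k+p_k-t_k-x)(d_k-x)^{p_k-t_k-1}$ becomes $(d_k-x)(d_k-x)^{-1}=1$ and the summand $(t_k-p_k)/(d_k+p_k-t_k-x)$ vanishes), so we may assume $m_i\ge 1$ for all $i$. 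Order the vertices of $V'$ so that those in a common part are consecutive. Since a vertex of $V_i$ has degree $d_i$ in $G$ and is adjacent to exactly the vertices outside $V_i$, the symmetric matrix $M$ has $(i,i)$ block $d_iI_{m_i}$ and $(i,j)$ block $-J_{m_i\times m_j}$ for $i\ne j$.

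Next I would check that $X=\{V'\cap V_1,\dots,V'\cap V_r\}$ is an equitable partition for $M$: from the block form, $M[V'\cap V_i|V'\cap V_i]\mathbf 1=d_i\mathbf 1$ and $M[V'\cap V_i|V'\cap V_j]\mathbf 1=-m_j\mathbf 1$ for $i\ne j$, so the quotient matrix is $B=((b_{ij}))$ with $b_{ii}=d_i$ and $b_{ij}=-m_j$ $(i\ne j)$; equivalently $B=\T{diag}(d_1+m_1,\dots,d_r+m_r)-\mathbf 1_r\,m'$ with $m=[m_1,\dots,m_r]'$. By the quotient-matrix fact, the spectrum of $M$ is the union of the spectrum of $B$ and the eigenvalues of $M$ afforded by vectors that sum to zero on each $V'\cap V_i$. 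If $v$ is such a vector and $v^{(i)}$ is its restriction supported on $V'\cap V_i$, the block form gives $Mv^{(i)}=d_iv^{(i)}$, since $-J_{m_j\times m_i}$ annihilates the zero-sum block $v^{(i)}$; hence $d_i$ occurs among those eigenvalues with multiplicity $m_i-1$. Therefore, up to an overall sign irrelevant for the roots,
\[
\T{det}(M-xI)=\T{det}(B-xI)\cdot\prod_{k=1}^r(d_k-x)^{m_k-1}.
\]

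To finish, apply Lemma \ref{1} to $B-xI=\mathbf 1_r\,(-m')+\T{diag}(d_1+m_1-x,\dots,d_r+m_r-x)$, i.e. with $a_k=-1$, $b_k=m_k$, $c_k=d_k+m_k-x$, obtaining
\[
\T{det}(B-xI)=\prod_{k=1}^r(d_k+m_k-x)\Bigl(1-\sum_{k=1}^r\frac{m_k}{d_k+m_k-x}\Bigr),
\]
an identity of polynomials in $x$ that persists for all $x$ (the hypothesis $c_k\ne0$ of Lemma \ref{1} fails only on a finite set, and both sides are polynomials). Substituting $m_k=p_k-t_k$, so that $d_k+m_k-x=d_k+p_k-t_k-x$ and $-m_k=t_k-p_k$, and multiplying by $\prod_k(d_k-x)^{m_k-1}$, we recover precisely the polynomial displayed in (\ref{eq1}); its roots are the eigenvalues of $L(A|A)$.

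The part I expect to demand the most care is the identification of the eigenvalues ``orthogonal to the columns of $H$'': one must see that a vector summing to zero on every part decomposes as a sum of zero-sum vectors on the individual parts, each being an eigenvector for the corresponding $d_i$, so that $d_i$ is recovered with exactly multiplicity $m_i-1$. Once this is settled, computing the quotient matrix $B$ and invoking Lemma \ref{1} are routine.
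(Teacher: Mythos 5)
Your proposal is correct and follows essentially the same route as the paper: identify the block structure of $L(A|A)$, observe that $\{V_1\setminus T_1,\dots,V_r\setminus T_r\}$ is an equitable partition with quotient matrix having diagonal entries $d_i$ and off-diagonal entries $t_j-p_j=-m_j$, extract the eigenvalues $d_i$ with multiplicity $p_i-t_i-1$ from the zero-sum vectors, and evaluate $\det(B-xI)$ via Lemma \ref{1}. You additionally attend to the degenerate case $m_i=0$ and to the hypothesis $c_k\neq 0$ via a polynomial-identity argument, details the paper passes over silently.
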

\begin{proof}
Let $L$ denote the Laplacian of $K_{p_1,p_2,\dots,p_r}$. We can verify that 
$$ L=(d_1I_{p_1}+J_{p_1})\oplus\dots\oplus (d_rI_{p_r}+J_{p_r})-J_n.$$
Thus,
 $$L(A|A)=(d_1I_{p_1}+J_{p_1})(T_1|T_1)\oplus\dots\oplus (d_rI_{p_r}+J_{p_r})(T_r|T_r)-J_n(A|A)$$
where  $T_i:=A\cap V_i.$ We can see that  $\pi=\{V_1-T_1,\dots, V_r-T_r\}$  is an equitable partition with respect to $L(A|A)$ with the quotient matrix
 \be \nonumber
B:=\left[
\begin{array}{ccccc}
d_1&t_2-p_2&t_3-p_3&\dots &t_r-p_r\\
t_1-p_1&d_2&t_3-p_3&\dots &t_r-p_r\\
t_1-p_1&t_2-p_2&d_3&\dots& t_r-p_r\\
\vdots &\vdots &\vdots & \ddots&\vdots\\
t_1-p_1&t_2-p_2&t_3-p_3 &\dots &d_r\\
 \end{array}
\right].
\ee
Now, eigenvalues of $L(A|A)$ consist of $[d_i]^{p_i-t_i-1}$ for $i=1,\dots, r$ and the
roots of $\T{det}(B-xI)=0$. Since 
$$B-xI=\T{diag}(d_1+p_1-t_1-x,\dots,d_r+p_r-t_r-x)+{\bf 1}[t_1-p_1,\dots,t_r-p_r],$$
in view of Lemma \ref{1}, we have 
$$\T{det}(B-xI)=\left (1+\sum_{k=1}^r\dfrac{t_k-p_k}{d_k+p_k-t_k-x}\right )\prod_{k=1}^r(d_k+p_k-t_k-x).$$
So the characteristic polynomial of $L(A|A)$ is 
$$\left (1+\sum_{k=1}^r\dfrac{t_k-p_k}{d_k+p_k-t_k-x}\right )\prod_{k=1}^r(d_k+p_k-t_k-x)(d_k-x)^{p_k-t_k-1}.$$
That completes the proof.
\end{proof}
\begin{remark}
Let $n:=p_1+\dots + p_r$ be the number of vertices of $=K_{p_1,p_2,\dots,p_r}$. Then $d_i=n-p_i$, so we can rewrite  $\T{det}(L(A|A)-xI_n)$ as 
$$\left (1-\sum_{k=1}^r\dfrac{p_k-t_k}{n-t_k}\right )\prod_{k=1}^r(n-t_k)(n-p_k)^{p_k-t_k-1}.$$
\end{remark}
\begin{corollary}
Let $K_{p_1,p_2,\dots,p_r}$ be a complete $r$-partite graph with $n$ vertices. For distinct vertices $u$ and $v$  we have
\begin{equation}\nonumber
\T{det}(L(u,v|u,v))=\left\{
\begin{array}{ll}
n^{r-3}\dfrac{(n-1)(2n-p_l-p_{l'})}{(n-p_l)(n-p_{l'})}\prod_{i=1}^r(n-p_i)^{p_i-1}                                        &  \text{if }  u\in V_l, v\in V_{l'}, l\neq l'\\
n^{r-2}\dfrac{2}{n-p_l}\prod_{i=1}^r(n-p_i)^{p_i-1}     & \text{if }u,v\in V_l. \\
\end{array}\right.
\end{equation}
Moreover,  $\T{det}(L(u|u)) = n^{r-2}\prod_{i=1}^r(n-p_i)^{p_i-1}$ is the number of spanning trees.
\end{corollary}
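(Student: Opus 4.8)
The plan is to read off all three determinants from Theorem~\ref{tm1}---more conveniently, from the form of $\T{det}(L(A|A)-xI)$ recorded in the Remark---by choosing $A$ appropriately and then setting $x=0$. As the proof of Theorem~\ref{tm1} shows, the expression in \eqref{eq1} is exactly the characteristic polynomial $\T{det}(L(A|A)-xI)$, so its value at $x=0$ is $\T{det}(L(A|A))$. With $d_k=n-p_k$ this value equals
$$\left(1-\sum_{k=1}^{r}\frac{p_k-t_k}{n-t_k}\right)\prod_{k=1}^{r}(n-t_k)(n-p_k)^{p_k-t_k-1},\qquad t_k:=|A\cap V_k|.$$
Everything then reduces to substituting the correct values of the $t_k$ and simplifying.

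First take $A=\{u,v\}$ with $u\in V_l$, $v\in V_{l'}$, $l\neq l'$, so that $t_l=t_{l'}=1$ and $t_k=0$ otherwise. Writing the scalar factor over the common denominator $n(n-1)$ and using $\sum_{k\neq l,l'}p_k=n-p_l-p_{l'}$ collapses it to $\dfrac{2n-p_l-p_{l'}}{n(n-1)}$, while in the product the $r-2$ parts with $k\neq l,l'$ contribute the factor $n^{r-2}$ (since $n-t_k=n$) and the two parts at $l,l'$ turn $\prod_{i=1}^{r}(n-p_i)^{p_i-1}$ into $\dfrac{(n-1)^2}{(n-p_l)(n-p_{l'})}\prod_{i=1}^{r}(n-p_i)^{p_i-1}$. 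Multiplying the two pieces gives the first case. The case $u,v\in V_l$ ($t_l=2$, others $0$) is entirely analogous: the scalar factor becomes $\dfrac{2(n-p_l)}{n(n-2)}$ and the product becomes $(n-2)\,n^{r-1}(n-p_l)^{-2}\prod_{i=1}^{r}(n-p_i)^{p_i-1}$, whose product is $\dfrac{2}{n-p_l}\,n^{r-2}\prod_{i=1}^{r}(n-p_i)^{p_i-1}$. The one-vertex case $A=\{u\}$, $u\in V_l$ ($t_l=1$, others $0$) works the same way, with scalar factor $\dfrac{n-p_l}{n(n-1)}$ and product $(n-1)\,n^{r-1}(n-p_l)^{-1}\prod_{i=1}^{r}(n-p_i)^{p_i-1}$, which simplifies to $n^{r-2}\prod_{i=1}^{r}(n-p_i)^{p_i-1}$.

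That this last quantity is the number of spanning trees of $K_{p_1,\dots,p_r}$ is just the Matrix--Tree theorem: any principal cofactor of $L(G)$ equals the spanning-tree count of $G$. The only genuine work here is the algebraic bookkeeping in the middle paragraph---combining the rational factor over a common denominator and tracking how lowering an exponent inside $\prod_{i=1}^{r}(n-p_i)^{p_i-1}$ produces the denominators $(n-p_l)$, $(n-p_{l'})$, and $(n-p_l)^2$. I would also note that the identities hold verbatim even when a part is small enough that an exponent $p_k-t_k-1$ turns negative: such negative powers cancel against the explicit $(n-t_k)$ factors and against $\prod_{i=1}^{r}(n-p_i)^{p_i-1}$, so there is no real singularity.
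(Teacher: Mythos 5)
Your proposal is correct and follows essentially the same route as the paper: both evaluate the characteristic polynomial of Theorem~\ref{tm1} (equivalently, the form in the Remark with $d_k=n-p_k$) at $x=0$ with $t_l=t_{l'}=1$, $t_l=2$, or $t_l=1$ according to the three cases, the only difference being that you carry out the algebraic simplification explicitly where the paper merely states the outcome. Your closing remarks on the Matrix--Tree theorem and on the harmlessness of negative exponents are accurate and slightly more careful than the paper's own treatment.
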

\begin{proof}
Let $u $ and $v$ be vertices of $K_{p_1,p_2,\dots,p_r}$ and let $A:=\{u,v\}.$ If $i$ and $j$ are chosen  from different parts, say $V_l$ and $V_{l'}$ with $l\neq l'$, then by Theorem \ref{tm1} we have $t_l=t_{l'}=1$ and $t_k=0$ for $k\neq l,l'$. Evaluating (\ref{eq1}) at $x=0$ gives 
$$ \T{det}(L(u,v|u,v))=n^{r-3}\dfrac{(n-1)(2n-p_l-p_{l'})}{(n-p_l)(n-p_{l'})}\prod_{i=1}^r(n-p_i)^{p_i-1}.$$
Now let $u$ and $v$ belong to the same part, say $V_l$. Then, $t_l=2$  and $t_k=0$ for $k\neq l$. Again Evaluating (\ref{eq1}) at $x=0$ gives 
$$ \T{det}(L(u,v|u,v))=n^{r-2}\dfrac{2}{n-p_l}\prod_{i=1}^r(n-p_i)^{p_i-1}.$$
The expression for $\T{det}(L(u|u))$  is obtained by setting $A:=\{u\}$ in
(\ref{eq1}) and putting  $x=0$, and $t_i=1$ when $u\in V_i$, otherwise $t_i=0$ for $i=1, \dots, r.$
\end{proof} 
\begin{definition}
Let $G$ be a connected graph. The  resistance distance between vertices $u$ and $v$ is defined as $\Omega(u,v)=:\dfrac{\T{det}(L(u,v|u,v))}{\T{det}(L(u|u))}$.
\end{definition}
\begin{corollary}[\cite{Ger}, Theorem 5.1]
Let $u\in V_i$ and $v\in V_j$. Then the resistance distance between $u$ and $v$ is given by
\begin{equation}\nonumber
\Omega(u,v)=\left\{
\begin{array}{ll}
\dfrac{(n-1)(2n-p_i-p_{j})}{n(n-p_i)(n-p_{j})}                                 &  \text{if } u\in V_i, v\in V_{j}, i\neq j\\
\dfrac{2}{n-p_i}    & \text{if }u,v\in V_i, u\neq v. \\
0                          &\text{if }u=v.
\end{array}\right.
\end{equation}
\end{corollary}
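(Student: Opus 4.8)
The plan is to derive the resistance distance formula directly from the preceding corollary, which gives closed-form expressions for $\T{det}(L(u,v|u,v))$ and for $\T{det}(L(u|u))$. By definition $\Omega(u,v) = \T{det}(L(u,v|u,v))/\T{det}(L(u|u))$, so the proof reduces to taking the two quotients. First I would record that $\T{det}(L(u|u)) = n^{r-2}\prod_{i=1}^r(n-p_i)^{p_i-1}$, the spanning tree count, which is the common denominator in both cases.

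For the case $u \in V_i$, $v \in V_j$ with $i \neq j$, I would substitute the first branch of the corollary's formula, namely
$$\Omega(u,v) = \frac{n^{r-3}\,\dfrac{(n-1)(2n-p_i-p_j)}{(n-p_i)(n-p_j)}\,\prod_{k=1}^r(n-p_k)^{p_k-1}}{n^{r-2}\,\prod_{k=1}^r(n-p_k)^{p_k-1}},$$
and cancel the product $\prod_{k=1}^r(n-p_k)^{p_k-1}$ together with $n^{r-3}/n^{r-2}=1/n$ to obtain $\dfrac{(n-1)(2n-p_i-p_j)}{n(n-p_i)(n-p_j)}$. For the case $u,v \in V_i$ with $u\neq v$, the same cancellation applied to the second branch, with $n^{r-2}/n^{r-2}=1$, leaves $\dfrac{2}{n-p_i}$. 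The case $u=v$ is immediate from the definition of resistance distance (or one may note that $L(u,v|u,v)$ is not defined and $\Omega(u,u)=0$ by convention).

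There is essentially no obstacle here: the work is purely a matter of dividing two explicit expressions and cancelling common factors, all of which is justified since $n - p_k > 0$ for every $k$ (each part is a proper subset of the vertex set, as $r \geq 2$ for a connected complete multipartite graph) and $n > 0$, so no denominator vanishes. The only point worth a sentence is to confirm that the exponents $p_k - 1$ are nonnegative, which holds trivially, and that the bracketed factors $(n-1)$, $(2n-p_i-p_j)$, $2$ are carried along unchanged. Thus the corollary statement follows immediately, and this also recovers \cite[Theorem 5.1]{Ger} by elementary algebra rather than circuit-theoretic reductions.
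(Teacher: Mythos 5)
Your derivation is exactly the intended one: the paper states this corollary immediately after the definition $\Omega(u,v)=\T{det}(L(u,v|u,v))/\T{det}(L(u|u))$ and the preceding corollary giving both determinants, so the proof is precisely the division and cancellation you carry out. The computation is correct in both cases, and your remarks on nonvanishing denominators are a harmless bonus.
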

\begin{example}
The resistance distance matrix  of $K_{2,3,4}$ is as follows
$$
\left[ \begin {array}{ccccccccc} 
0&\frac{2}{7}&{\frac {52}{189}}&{\frac {52}{189}}&{\frac {52}{189}}&{\frac {32}{105}}&{\frac {32}{105}}&{\frac {32}{105}}&{\frac {32}{105}}\\ 
\noalign{\medskip}\frac{2}{7}&0&{\frac {52}{189}}&{\frac {52}{189}}&{\frac {52}{189}}&{\frac {32}{105}}&{\frac {32}{105}}&{\frac {32}{105}}&{\frac {32}{105}}\\
 \noalign{\medskip}{\frac {52}{189}}&{\frac {52}{189}}&0&\frac{1}{3}&\frac{1}{3}&{\frac {44}{135}}&{\frac {44}{135}}&{\frac {44}{135}}&{\frac {44}{135}}\\ 
\noalign{\medskip}{\frac {52}{189}}&{\frac {52}{189}}&\frac{1}{3}&0&\frac{1}{3}&{\frac {44}{135}}&{\frac {44}{135}}&{\frac {44}{135}}&{\frac {44}{135}}\\ 
\noalign{\medskip}{\frac {52}{189}}&{\frac {52}{189}}&\frac{1}{3}&\frac{1}{3}&0&{\frac {44}{135}}&{\frac {44}{135}}&{\frac {44}{135}}&{\frac {44}{135}}\\
 \noalign{\medskip}{\frac {32}{105}}&{\frac {32}{105}}&{\frac {44}{135}}&{\frac {44}{135}}&{\frac {44}{135}}&0&\frac{2}{5}&\frac{2}{5}&\frac{2}{5}\\ 
\noalign{\medskip}{\frac {32}{105}}&{\frac {32}{105}}&{\frac {44}{135}}&{\frac {44}{135}}&{\frac {44}{135}}&\frac{2}{5}&0&\frac{2}{5}&\frac{2}{5}\\ 
\noalign{\medskip}{\frac {32}{105}}&{\frac {32}{105}}&{\frac {44}{135}}&{\frac {44}{135}}&{\frac {44}{135}}&\frac{2}{5}&\frac{2}{5}&0&\frac{2}{5}\\ 
\noalign{\medskip}{\frac {32}{105}}&{\frac {32}{105}}&{\frac {44}{135}}&{\frac {44}{135}}&{\frac {44}{135}}&\frac{2}{5}&\frac{2}{5}&\frac{2}{5}&0
\end {array}
 \right] 
$$
\end{example}
\section{Kirchhoff index}
In the following, we compute the Kirchhoff index of a complete multipartite graph. 
\begin{definition}
The  Kirchhoff index of a connected graph $G$, denoted by $\T{Kf}(G)$,  is defined as half of the summation of all resistance distances between any two vertices, i.e, 
$$ \T{Kf}(G)=\frac{1}{2}\sum_{u,v\in V(G)}\Omega(u,v).$$
\end{definition}
In this regard, there are  two applicable  relations: 
\be
\T{Kf}(G)=n\T{trace}(L^+(G))~~~~\text{and}~~~~\T{Kf}(G)=n\sum_{i=1}^{n-1}\frac{1}{\lambda_i},
\ee\label{Kf}
 where $L^+(G)$ is the Moore-Penrose inverse of the Laplacian of $G$ and $\lambda_1\geq\dots\geq\lambda_{n-1}>\lambda_n=0$ are the Laplacian eigenvalues of $G$.  
\begin{remark}\label{rem1}
Let $G$ be a  connected graph. Then the following facts hold
\begin{itemize}
\item[(1)] $L(G)+L(\overline{G})=nI-J$.
\item[(2)] $(L(G)+J)^{-1}=L^+(G)+\frac{1}{n^2}J$. 
\item[(3)] $(-L(G)+nI)^{-1}=L^+(\overline{G})+\frac{1}{n^2}J$.
\item[(4)] If $\alpha,\beta$ are scalers in such a way that $\alpha I+\beta J$ is nonsingular, then $(\alpha I+\beta J)^{-1}=\frac{1}{\alpha}I-\frac{\beta}{\alpha(\alpha+n\beta)}J$.
\end{itemize}
\end{remark}
\begin{proposition}\label{prop1}
Let $G_i$ be a connected graph of order $p_i$, for $i=1,\dots,r$.
Then
 $$L^+(\vee_{i=1}^r G_i)=-\frac{1}{n^2}J+\oplus_{i=1}^r (nI_{p_i}-L(\overline{G_i}))^{-1}.$$
 In particular,  If 
$\lambda^i_1\geq\dots\geq\lambda^i_{p_i-1}>\lambda^i_{p_i}=0$
  are the Laplacian eigenvalues of $G_i$, then 
$$0,[n]^{r-1},n-p_i+\lambda_i^j$$
for $ i=1,\dots,p_j-1$ and $j=1,\dots,r$ are the Laplacian eigenvalues of $\vee_{i=1}^r G_i$, where $n=p_1+\dots+p_r$.
\end{proposition}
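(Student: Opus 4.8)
The plan is to build the Moore–Penrose inverse of $L(\vee_{i=1}^r G_i)$ directly from a structural description of the Laplacian and then read off the spectrum. First I would record that, writing $n = p_1 + \dots + p_r$, the join has Laplacian
$$L\Bigl(\vee_{i=1}^r G_i\Bigr) = \bigoplus_{i=1}^r \bigl(L(G_i) + (n-p_i)I_{p_i}\bigr) - J_n,$$
since each vertex of $G_i$ gains exactly $n-p_i$ new neighbours (all vertices outside its block) while the internal adjacency within block $i$ is unchanged. Using $L(G_i)+L(\overline{G_i}) = p_iI_{p_i}-J_{p_i}$ from Remark \ref{rem1}(1) applied inside block $i$, the block-diagonal part rewrites as $\bigoplus_i\bigl(nI_{p_i} - J_{p_i} - L(\overline{G_i})\bigr)$, so
$$L\Bigl(\vee_{i=1}^r G_i\Bigr) + J_n = \bigoplus_{i=1}^r \bigl(nI_{p_i} - L(\overline{G_i})\bigr),$$
because the $-J_{p_i}$ on the diagonal blocks together with the off-diagonal all-ones blocks of $J_n$ recombine exactly into $+J_n$ when we move $J_n$ to the left side — that cancellation is the one bookkeeping point to check carefully.

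Next I would invert. The matrix $M_i := nI_{p_i} - L(\overline{G_i})$ is positive definite: the eigenvalues of $L(\overline{G_i})$ lie in $[0, p_i]$ and $p_i < n$ (there are at least two blocks, so $G_i$ is a proper subgraph), hence $M_i$ is nonsingular and $\bigoplus_i M_i = L(\vee G_i) + J_n$ is invertible. By Remark \ref{rem1}(2), $(L(G)+J_n)^{-1} = L^+(G) + \tfrac{1}{n^2}J_n$ for any connected graph $G$ of order $n$; the join of connected (indeed any nonempty) graphs is connected, so this applies and gives
$$L^+\Bigl(\vee_{i=1}^r G_i\Bigr) = \Bigl(\bigoplus_{i=1}^r M_i\Bigr)^{-1} - \frac{1}{n^2}J_n = -\frac{1}{n^2}J_n + \bigoplus_{i=1}^r \bigl(nI_{p_i} - L(\overline{G_i})\bigr)^{-1},$$
which is the claimed formula.

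For the eigenvalue statement I would diagonalize blockwise. Fix $i$ and let $v^i_1, \dots, v^i_{p_i}$ be an orthonormal eigenbasis of $L(G_i)$ with $L(G_i)v^i_j = \lambda^i_j v^i_j$, ordered so that $\lambda^i_{p_i} = 0$ with $v^i_{p_i} = \tfrac{1}{\sqrt{p_i}}\mathbf 1_{p_i}$. Inside block $i$ we have $L(\overline{G_i}) = p_iI_{p_i} - J_{p_i} - L(G_i)$, so $v^i_j$ is also an eigenvector of $M_i = nI_{p_i} - L(\overline{G_i}) = (n-p_i)I_{p_i} + J_{p_i} + L(G_i)$; for $j < p_i$ the vector $v^i_j \perp \mathbf 1_{p_i}$ kills the $J_{p_i}$ term and the eigenvalue is $n - p_i + \lambda^i_j$, while for $j = p_i$ the $J_{p_i}$ contributes $p_i$ and the eigenvalue is $n$. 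Thus $\bigoplus_i M_i$ has eigenvalues $n$ with multiplicity $r$ (one per block, from the all-ones vectors) and $n - p_i + \lambda^i_j$ for $j = 1, \dots, p_i - 1$, $i = 1, \dots, r$. Since $L(\vee G_i) = \bigl(\bigoplus_i M_i\bigr) - J_n$ and each $\mathbf 1_{p_i}$-direction combination still needs one more step, I would note instead that the Laplacian eigenvalues of $\vee G_i$ are obtained from those of $\bigoplus_i M_i$ by subtracting $1$ from the $+J_n$ correction only on the all-ones directions: concretely, restricting $L(\vee G_i)+J_n = \bigoplus_i M_i$ to the $r$-dimensional span of the $\mathbf 1_{p_i}$'s, the operator $\bigoplus_i M_i$ acts as $nI_r$ there while $J_n$ acts as the rank-one matrix with entries $\sqrt{p_ip_j}$, whose nonzero eigenvalue is $n$; subtracting gives eigenvalues $0$ (multiplicity $1$) and $n$ (multiplicity $r-1$) on that subspace, and $n-p_i+\lambda^i_j$ on the orthogonal complement, exactly the claimed list $0, [n]^{r-1}, n-p_i+\lambda^i_j$.

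The main obstacle I anticipate is the first cancellation: verifying cleanly that adding $J_n$ to the block-diagonal form collapses the cross-block all-ones blocks and the $-J_{p_i}$ diagonal corrections into precisely $\bigoplus_i(nI_{p_i}-L(\overline{G_i}))$, i.e. that no stray terms survive. Everything after that is a routine application of Remark \ref{rem1} and a blockwise eigen-decomposition; the only care needed there is tracking the all-ones directions separately from their orthogonal complements so that the lone $0$ eigenvalue and the multiplicity-$(r-1)$ copy of $n$ come out correctly.
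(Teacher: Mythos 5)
Your argument is correct and, for the Moore--Penrose formula, follows essentially the same route as the paper: both hinge on the identity $L(\vee_{i=1}^r G_i)+J_n=\bigoplus_{i=1}^r\bigl(nI_{p_i}-L(\overline{G_i})\bigr)$ combined with the fact that $(L(G)+J)^{-1}=L^+(G)+\frac{1}{n^2}J$ for connected $G$; you additionally justify invertibility, which the paper leaves implicit. One slip to repair: your first displayed formula $L(\vee_{i=1}^r G_i)=\bigoplus_i\bigl(L(G_i)+(n-p_i)I_{p_i}\bigr)-J_n$ is literally false (its diagonal entries come out as $d_{G_i}(v)+n-p_i-1$, and the within-block off-diagonal entries are shifted by $-1$); the correct statement is $L(\vee_{i=1}^r G_i)=\bigoplus_i\bigl(L(G_i)+(n-p_i)I_{p_i}+J_{p_i}\bigr)-J_n$, i.e.\ the subtracted term should be only the off-diagonal part $J_n-\bigoplus_i J_{p_i}$. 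Your verbal description of the cancellation shows you intend exactly this, and once the $+J_{p_i}$ is restored the identity $L(\vee_{i=1}^r G_i)+J_n=\bigoplus_i(nI_{p_i}-L(\overline{G_i}))$ follows just as in the paper. For the spectrum the two proofs genuinely diverge in execution: the paper twice invokes a cited lemma relating the Laplacian spectra of $G$ and $\overline{G}$ (and of $L$ and $L+J$), whereas you diagonalize by hand, using blockwise eigenvectors orthogonal to ${\bf 1}_{p_i}$ to produce the eigenvalues $n-p_i+\lambda^i_j$ and then restricting $\bigoplus_i M_i-J_n$ to the span of the block indicators, where it acts as $nI_r$ minus the rank-one matrix $(\sqrt{p_ip_j})$ with nonzero eigenvalue $n$, yielding $0$ and $[n]^{r-1}$. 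Your version is more self-contained and exhibits the eigenvectors explicitly (and the dimension count $1+(r-1)+\sum_i(p_i-1)=n$ confirms completeness); the paper's is shorter by citation. Both are valid.
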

\begin{proof}
We see that
 $$L(\vee_{i=1}^r G_i)+J=\oplus_{i=1}^ r(J_{p_i}+(n-p_i)I_{p_i}+L(G_i))=\oplus_{i=1}^r(nI-L(\overline{G_i})). $$
 In view of Remark \ref{rem1}, 
 $$(L(\vee_{i=1}^r G_i)+J)^{-1}=L^+(\vee_{i=1}^r G_i)+\frac{1}{n^2}J=\oplus_{i=1}^r(nI_{p_i}-L(\overline{G_i}))^{-1}.$$
 Therefore,  $L^+(\vee_{i=1}^r G_i)=-\frac{1}{n^2}J+\oplus_{i=1}^r (nI_{p_i}-L(\overline{G_i}))^{-1}.$
 
 In particular,  in view of Remark \ref{rem1}(1) and \cite[Lemma 4.5]{bapat}, the Laplacian eigenvalues of $\overline{G_i}$ are $\lambda=0$ along with $p_i-\lambda^i_j$, for $j=1,\dots,p_i$. Thus, the
 eigenvalues of $ L(\vee_{i=1}^r G_i)+J $  comprise of the eigenvalues of $ nI-L(\overline{G_i}) $, for $i=1,\dots, r$, that is, $\{n\}^r$, $n-p_i+\lambda^i_j$, for $j=1,\dots,p_i$ and $i=1\dots, r$. Again, in view of   \cite[Lemma 4.5]{bapat}, the eigenvalues of $ L(\vee_{i=1}^r G_i)$ are $0,[n]^{r-1},n-p_i+\lambda_i^j$.
for $ i=1,\dots,p_j-1$ and $j-1,\dots,r$ are the Laplacian eigenvalues of $\vee_{i=1}^r G_i$, where $n=p_1+\dots+p_r$.
\end{proof}
\begin{corollary}\label{cor1}
Under the assumptions of Proposition \ref{prop1}, we have $$\T{Kf}(\vee_{i=1}^r G_i))=r-1+n\sum_{i=1}^r\sum_{j=1}^{p_i-1}\frac{1}{n-p_i+\lambda_j^i}.$$
\end{corollary}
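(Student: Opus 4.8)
The plan is to read the Kirchhoff index straight off the eigenvalue list furnished by Proposition \ref{prop1}, using the spectral formula $\T{Kf}(G)=n\sum_{i=1}^{n-1}1/\lambda_i$, valid for a connected graph on $n$ vertices with nonzero Laplacian eigenvalues $\lambda_1\geq\dots\geq\lambda_{n-1}>0$. By Proposition \ref{prop1}, the graph $\vee_{i=1}^r G_i$ is connected with $n=p_1+\dots+p_r$ vertices, and its Laplacian spectrum consists of a single $0$, the eigenvalue $n$ with multiplicity $r-1$, and the values $n-p_i+\lambda^i_j$ for $i=1,\dots,r$ and $j=1,\dots,p_i-1$.

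First I would run the bookkeeping check that these are all $n$ eigenvalues: $1+(r-1)+\sum_{i=1}^r(p_i-1)=r+\sum_{i=1}^r p_i-r=n$. Since a connected graph has exactly one zero Laplacian eigenvalue, the list of nonzero eigenvalues is precisely $[n]^{r-1}$ together with the $n-p_i+\lambda^i_j$, each of which is positive (if $r\geq 2$ then $n-p_i=\sum_{k\neq i}p_k\geq 1$, while for $r=1$ one has $\lambda^i_j>0$ for $j\leq p_i-1$ by connectedness of $G_i$), so every reciprocal below is well defined. Substituting into the spectral formula then gives
$$\T{Kf}\!\left(\vee_{i=1}^r G_i\right)=n\left(\frac{r-1}{n}+\sum_{i=1}^r\sum_{j=1}^{p_i-1}\frac{1}{n-p_i+\lambda^i_j}\right)=r-1+n\sum_{i=1}^r\sum_{j=1}^{p_i-1}\frac{1}{n-p_i+\lambda^i_j},$$
which is the asserted identity.

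There is no real obstacle here; the only step worth not skipping is the multiplicity count just performed, since it is what guarantees we have used every nonzero eigenvalue exactly once. As an alternative one could bypass the spectral formula altogether and instead compute $\T{Kf}(\vee_{i=1}^r G_i)=n\,\T{trace}\bigl(L^+(\vee_{i=1}^r G_i)\bigr)$ directly from the block expression $L^+(\vee_{i=1}^r G_i)=-\frac{1}{n^2}J+\oplus_{i=1}^r(nI_{p_i}-L(\overline{G_i}))^{-1}$ in Proposition \ref{prop1}: the trace of $-\frac{1}{n^2}J$ is $-1/n$, while the trace of $(nI_{p_i}-L(\overline{G_i}))^{-1}$ equals $\sum_{j=1}^{p_i}1/(n-p_i+\lambda^i_j)$, whose $j=p_i$ term contributes $1/n$; summing over the $r$ blocks and combining the resulting $r/n$ with the $-1/n$ recovers the same answer. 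Either route lands on the stated formula.
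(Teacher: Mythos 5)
Your argument is correct and is essentially the paper's own proof: both apply the spectral formula $\T{Kf}(G)=n\sum_{i=1}^{n-1}1/\lambda_i$ to the eigenvalue list $0,[n]^{r-1},n-p_i+\lambda^i_j$ supplied by Proposition \ref{prop1}. Your multiplicity count and the alternative trace computation via $L^+$ are sound additions, but they do not change the route.
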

\begin{proof}
It is enough to employ  (\ref{Kf}) together with the result of Proposition \ref{prop1}. We see
 \begin{align*}
 \T{Kf}(\vee_{i=1}^r G_i))&=n\sum_{i=1}^{n-1}\frac{1}{\lambda_i}=n(\frac{r-1}{n}+\sum_{i=1}^r\sum_{j=1}^{p_i-1}\frac{1}{n-p_i+\lambda_j^i}).
  \end{align*}
and the proof is complete. \end{proof}
\begin{corollary}\label{cor2}
Let $G:=K_{p_1,\dots,p_r}$ be a complete multipartite graph. Then
$$\T{Kf}(\vee_{i=1}^r G_i))=r-1+n\sum_{i=1}^r\frac{p_i-1}{n-p_i}.$$
\end{corollary}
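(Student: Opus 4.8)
The plan is to specialize Corollary \ref{cor1} to the case where every factor $G_i$ is the empty graph $\overline{K_{p_i}}$, since by definition $K_{p_1,\dots,p_r}=\overline{K_{p_1}}\vee\dots\vee\overline{K_{p_r}}$. The only input I need is the Laplacian spectrum of an empty graph on $p_i$ vertices: $L(\overline{K_{p_i}})$ is the zero matrix, so its Laplacian eigenvalues are $\lambda_j^i=0$ for all $j=1,\dots,p_i$. In particular the ``nonzero'' eigenvalues $\lambda_1^i\geq\dots\geq\lambda_{p_i-1}^i$ appearing in Corollary \ref{cor1} are all equal to $0$ (here one should note that $\lambda_{p_i}^i=0$ is the forced zero eigenvalue, and the remaining $p_i-1$ of them happen also to be zero because the graph is disconnected/edgeless).

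Substituting $\lambda_j^i=0$ into the formula of Corollary \ref{cor1} gives
\begin{align*}
\T{Kf}(K_{p_1,\dots,p_r}) &= r-1+n\sum_{i=1}^r\sum_{j=1}^{p_i-1}\frac{1}{n-p_i+0}
= r-1+n\sum_{i=1}^r\frac{p_i-1}{n-p_i},
\end{align*}
which is exactly the claimed identity. So the entire proof is: invoke Corollary \ref{cor1} with $G_i=\overline{K_{p_i}}$, observe the spectrum is trivial, and collapse the inner sum of $p_i-1$ equal terms.

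There is essentially no obstacle here; the statement is a one-line corollary of the preceding one. The only point requiring a moment's care is making sure that $n-p_i\neq 0$ so that the terms are well defined: this holds because $K_{p_1,\dots,p_r}$ is connected only when $r\geq 2$, whence $n=p_1+\dots+p_r>p_i$ for each $i$, so each denominator $n-p_i$ is a positive integer. I would also remark, for consistency, that this agrees with the Kirchhoff index obtained by directly summing the resistance distances from the earlier corollary of \cite{Ger}, namely
$$\T{Kf}(G)=\frac12\Bigl(\sum_i p_i(p_i-1)\frac{2}{n-p_i}+\sum_{i\neq j}p_ip_j\frac{(n-1)(2n-p_i-p_j)}{n(n-p_i)(n-p_j)}\Bigr),$$
though this verification is optional and not needed for the proof.

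(As a minor note, the statement is written with the left-hand side $\T{Kf}(\vee_{i=1}^r G_i))$ carried over verbatim from Corollary \ref{cor1}; it should of course read $\T{Kf}(K_{p_1,\dots,p_r})$, and I would correct this typo in the final version.)
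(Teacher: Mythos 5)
Your proof is correct and follows exactly the paper's own route: specialize Corollary \ref{cor1} to $G_i=\overline{K_{p_i}}$, note that all Laplacian eigenvalues of an edgeless graph vanish, and collapse the inner sum of $p_i-1$ identical terms. Your added remarks (that $n-p_i>0$ for $r\geq 2$, and that the left-hand side should read $\T{Kf}(K_{p_1,\dots,p_r})$) are sensible but do not change the argument.
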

\begin{proof}
In view of Corollary \ref{cor1}, we need only to consider $G_i$ as empty graph of order $p_i$. Apparently, they are $0$-regular graphs. So, in this case we have $\lambda_i^j=0$. substitute  in Corollary \ref{cor1} to get 
$$\T{Kf}(\vee_{i=1}^r G_i))=r-1+n\sum_{i=1}^r\sum_{j=1}^{p_i-1}\frac{1}{n-p_i}=r-1+n\sum_{i=1}^r\frac{p_i-1}{n-p_i},$$
completing the proof.
\end{proof}
An alternative way to derive the Kirchhoff index is as follows. 
Since $\T{Kf}(G)=n\T{trace}(L^+(G))$, thus in view of Proposition \ref{prop1}, 
 \begin{align}
 \T{Kf}(K_{p_1,\dots,p_r})&=n\T{trace}(-\frac{1}{n^2}J+\oplus_{i=1}^r (nI_{p_i}-L(K_{p_i}))^{-1})\nonumber \\
 =&n\T{trace}(-\frac{1}{n^2}J+\oplus_{i=1}^r (nI_{p_i}-L(K_{p_i}))^{-1})\nonumber\\
  =&n\T{trace}(-\frac{1}{n^2}J+\oplus_{i=1}^r ((n-p_i)I_{p_i}+J_{p_i})^{-1})\nonumber\\
 =&n\T{trace}(-\frac{1}{n^2}J+\oplus_{i=1}^r (\frac{1}{n-p_i}I-\frac{1}{n(n-p_i)}J))\nonumber\\
 =&n(\frac{-1}{n}+\sum_{i=1}^r(\frac{p_i}{n-p_i}-\frac{p_i}{n(n-p_i)}))\nonumber\\
 =&-1+n\sum_{i=1}^r\frac{p_i}{n-p_i}(1-\frac{1}{n})\nonumber\\
 =&-1+(n-1)\sum_{i=1}^r\frac{p_i}{n-p_i}\nonumber\\
 =&-1-r(n-1)+n(n-1)\sum_{i=1}^r\frac{1}{n-p_i}.\label{2}
 \end{align}
\begin{example}
$ \T{Kf}(K_{2,2,2,3,3,5,7})=-1+23(\frac{6}{22}+\frac{6}{21}+\frac{5}{19}+\frac{7}{17})\approx 27.367$.
\end{example}
\section{Extremal Kirchhoff index}
In this section, motivated by \cite{Liu1},  we explore  within all complete $r$-partite graphs with $n$ vertices,  which  have maximal or minimal Kirchhoff index. 
By routine and straightforward calculations we obtain the following lemma. 
\begin{lemma}\label{lm2.2}
Let $x\geq y > \alpha>0$. Then 
\begin{itemize}
\item[(1)] $\dfrac{1}{x+\alpha}+\dfrac{1}{y-\alpha}\geq \dfrac{1}{x}+\dfrac{1}{y}$
\item[(2)] $\dfrac{1}{x- \alpha}+\dfrac{1}{y+\alpha}\leq \dfrac{1}{x}+\dfrac{1}{y}$ if $x-y\geq \alpha$. 
\end{itemize}
\end{lemma}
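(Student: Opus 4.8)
The plan is to prove both inequalities by clearing denominators and reducing each to an obviously nonnegative expression, exploiting the hypotheses $x\ge y>\alpha>0$ (and $x-y\ge\alpha$ in part (2)). Since all quantities appearing in denominators are positive — for (1) we have $x+\alpha>0$ and $y-\alpha>0$ because $y>\alpha$; for (2) we have $x-\alpha>0$ since $x\ge y>\alpha$, and $y+\alpha>0$ — multiplying through by the relevant products of denominators is an equivalence, not merely an implication, so no sign-tracking subtleties arise.

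For part (1), I would move everything to one side and combine over the common denominator $x(y-\alpha)+\text{(symmetric terms)}$; more concretely, $\frac{1}{x+\alpha}+\frac{1}{y-\alpha}-\frac{1}{x}-\frac{1}{y}$ can be grouped as $\left(\frac{1}{y-\alpha}-\frac{1}{y}\right)-\left(\frac{1}{x}-\frac{1}{x+\alpha}\right)=\frac{\alpha}{y(y-\alpha)}-\frac{\alpha}{x(x+\alpha)}$. Factoring out $\alpha>0$, the claim becomes $\frac{1}{y(y-\alpha)}\ge\frac{1}{x(x+\alpha)}$, i.e. $x(x+\alpha)\ge y(y-\alpha)$, which follows since $x\ge y>y-\alpha>0$ gives $x^2\ge y^2$ and $\alpha x\ge 0\ge-\alpha y$, so $x^2+\alpha x\ge y^2-\alpha y$. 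This is clean and the grouping trick does all the work.

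For part (2), the same grouping gives $\frac{1}{x-\alpha}+\frac{1}{y+\alpha}-\frac{1}{x}-\frac{1}{y}=\left(\frac{1}{x-\alpha}-\frac{1}{x}\right)-\left(\frac{1}{y}-\frac{1}{y+\alpha}\right)=\frac{\alpha}{x(x-\alpha)}-\frac{\alpha}{y(y+\alpha)}$, so after dividing by $\alpha>0$ the inequality $\le 0$ is equivalent to $y(y+\alpha)\le x(x-\alpha)$, that is, $x^2-\alpha x-y^2-\alpha y\ge 0$, i.e. $(x-y)(x+y)\ge\alpha(x+y)$, i.e. $(x+y)(x-y-\alpha)\ge 0$. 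Since $x+y>0$, this reduces exactly to the hypothesis $x-y\ge\alpha$, which explains why that extra condition is needed in (2) but not in (1).

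The main obstacle is essentially bookkeeping rather than genuine difficulty: one must be careful to perform the ``pair the terms with matching numerators'' grouping (the $\frac1{y-\alpha}-\frac1y$ versus $\frac1x-\frac1{x+\alpha}$ pairing) rather than a naive common-denominator expansion, since the latter produces a degree-three polynomial identity that is messier to certify as nonnegative. With the right pairing each part collapses to a single quadratic comparison that is immediate from the hypotheses, so I expect the whole proof to be three or four lines per item.
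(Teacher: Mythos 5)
Your proof is correct and complete; the paper itself gives no argument for this lemma, dismissing it as ``routine and straightforward calculations,'' so your pairing trick (reducing each part to the single quadratic comparison $x(x+\alpha)\geq y(y-\alpha)$, resp.\ $x(x-\alpha)\geq y(y+\alpha)$, and hence to $(x+y)(x-y-\alpha)\geq 0$ in part (2)) supplies exactly the verification the authors omit. Nothing further is needed.
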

\begin{theorem}
Let $p_1\geq\dots \geq p_r$ and $n=p_1+\dots +p_r$.
$ \T{Kf}(K_{p_1,\dots,p_r})$ has maximal Kirchhoff index if $ K_{p_1,\dots,p_r}=K_{(n-r+1),1^{r-1}} $
Also,  $\T{Kf}(K_{p_1,\dots,p_r})$ has minimal Kirchhoff index if $ K_{p_1,\dots,p_r}=K_{p^k,(p-1)^{r-k}}$ where $p=\lfloor\dfrac{n}{r}\rfloor+1$ and $ k=n-r\lfloor\dfrac{n}{r}\rfloor$.
\end{theorem}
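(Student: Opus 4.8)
The plan is to use the closed form for the Kirchhoff index of a complete multipartite graph from equation (\ref{2}):
\[
\T{Kf}(K_{p_1,\dots,p_r})=-1-r(n-1)+n(n-1)\sum_{i=1}^r\frac{1}{n-p_i}.
\]
Since $n$ and $r$ are fixed, maximizing (resp.\ minimizing) $\T{Kf}$ over all partitions $p_1+\dots+p_r=n$ with each $p_i\ge 1$ is equivalent to maximizing (resp.\ minimizing) the function $f(p_1,\dots,p_r)=\sum_{i=1}^r \frac{1}{n-p_i}$. I would treat this as a discrete optimization problem and establish both extremes by a swapping/transfer argument: starting from an arbitrary optimal partition, show that unless it has the claimed shape one can move a single unit from one part to another and strictly increase (resp.\ decrease) $f$, contradicting optimality.

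For the \textbf{maximal} case, the claim is that the extremal configuration is $K_{(n-r+1),1^{r-1}}$, i.e.\ the partition is as unbalanced as possible. The key observation is that $g(p):=\frac{1}{n-p}$ is strictly convex in $p$ on $\{1,\dots,n-1\}$ (equivalently, $g(p+1)+g(p-1)>2g(p)$), so if there exist two parts $p_i\ge p_j$ with $p_i\le n-2$ and $p_j\ge 2$ — so that both remain in the valid range after a transfer — then replacing $(p_i,p_j)$ by $(p_i+1,p_j-1)$ strictly increases $f$. Here the relevant inequality is essentially Lemma \ref{lm2.2}(1) with $x=n-p_j$, $y=n-p_i$, $\alpha=1$: indeed $\frac{1}{(n-p_j)+1}+\frac{1}{(n-p_i)-1}\ge\frac{1}{n-p_j}+\frac{1}{n-p_i}$. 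Iterating this push-to-the-extremes move, every part except the largest is driven down to $1$, forcing the partition $(n-r+1,1,\dots,1)$.

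For the \textbf{minimal} case, convexity works the other way: $f$ is minimized by making the parts as equal as possible. If two parts differ by at least $2$, say $p_i\ge p_j+2$, then by Lemma \ref{lm2.2}(2) with $x=n-p_j$, $y=n-p_i$, $\alpha=1$ — whose hypothesis $x-y=p_i-p_j\ge 1$ holds — transferring one unit from $p_i$ to $p_j$ gives $\frac{1}{(n-p_j)-1}+\frac{1}{(n-p_i)+1}\le\frac{1}{n-p_j}+\frac{1}{n-p_i}$, a strict decrease by the strict convexity. Hence in any minimizer all parts differ by at most $1$; writing $p=\lfloor n/r\rfloor+1$ and $k=n-r\lfloor n/r\rfloor$, the unique such partition (up to reordering) has $k$ parts equal to $p$ and $r-k$ parts equal to $p-1$, which is exactly $K_{p^k,(p-1)^{r-k}}$ (with the convention that when $r\mid n$ one has $k=r$... one should note the degenerate cases). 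I would also record that when $r=n$ the graph is $K_n$ and both extremes coincide, and when $r=1$ the statement is vacuous.

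The main obstacle is bookkeeping the boundary of the feasible region rather than any analytic difficulty: one must be careful that the transfer move keeps every part in $\{1,\dots,n-r+1\}$ (so the graph stays a genuine $r$-partite graph with nonempty parts), and that the strict inequalities in Lemma \ref{lm2.2} are genuinely strict under the stated hypotheses — the lemma as stated gives only $\ge$/$\le$, so I would either strengthen it with the convexity remark above or verify strictness directly from $g(p+1)+g(p-1)-2g(p)=\frac{2}{(n-p-1)(n-p)(n-p+1)}>0$. A clean way to package the whole argument is to observe that among integer partitions of $n$ into exactly $r$ positive parts, the "maximally unbalanced" partition $(n-r+1,1^{r-1})$ is the unique maximum and the "balanced" partition is the unique minimum of $\sum g(p_i)$ for any strictly convex $g$, a standard majorization fact (Karamata's inequality), which immediately yields both conclusions once equation (\ref{2}) reduces the problem to minimizing/maximizing $\sum_i \frac{1}{n-p_i}$.
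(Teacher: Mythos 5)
Your proposal takes essentially the same route as the paper: reduce via the closed form (\ref{2}) to extremizing $\sum_{i=1}^r \frac{1}{n-p_i}$ over partitions of $n$ into $r$ positive parts, then apply the exchange inequalities of Lemma \ref{lm2.2} — pushing all excess into one part for the maximum and balancing the parts (no two differing by more than $1$) for the minimum. Your remark about strictness is a worthwhile refinement, since the paper's contradiction-with-minimality step as written only invokes a non-strict inequality, and your explicit verification that the transfer is strictly decreasing when $p_i-p_j\geq 2$ closes that small gap.
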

\begin{proof}
In view of (\ref{2}),  parameters $p_1,\dots, p_r$ with extremal Kirchhoff index  can be derived from those which give extremal values of $f(p_1,\dots, p_r):=\sum_{i=1}^r\frac{1}{n-p_i}$ where $p_1+\dots+ p_r=n$. Employing Lemma \ref{lm2.2}, we have 
 \begin{eqnarray*}
f(p_1,\dots, p_r) &\leq&  f(p_1+p_r-1,p_2\dots,p_{r-1}, 1)\\
&\leq& f(p_1+p_r+p_{r-1}-2,p_2,\dots,1, 1)\\
&\vdots&\\
& \leq & f(p_1+p_r+\dots+p_2-r+1,1,\dots,1)\\
& =& f(n-r+1,1,\dots, 1)=\dfrac{1}{r-1}+\dfrac{r-1}{n-1}
\end{eqnarray*}
 So the maximal Kirchhoff index is attained when  $p_1=n-r+1$ and $p_i=1$ for $i=2,\dots,r$.
 In this case we have 
 $$\T{max}_{p_1+\dots+p_r=n}(\T{Kf}(K_{p_1,\dots,p_r}))=-1-r(n-1)+n(n-1)(\dfrac{1}{r-1}+\dfrac{r-1}{n-1}).$$
 
 As to the minimal Kirchhoff index, let $f(p_1,\dots, p_r):=\sum_{i=1}^r\frac{1}{n-p_i}$ be minimal. We claim that   $|p_i-p_j|\leq1$  for $1\leq i,j\leq r$. In  contrast, let there exist $p_i$ and $p_j$ such that $p_i-p_j\geq2$. Then, 
 $$f(p_1,\dots,p_i-1,\dots,p_j+1,\dots, p_r)-f(p_1,\dots, p_r)=\dfrac{1}{n-p_i+1}+\dfrac{1}{n-p_j-1}-\dfrac{1}{p_i}-\dfrac{1}{p_j}.$$
 This together with Lemma \ref{lm2.2} implies that $ f(p_1,\dots,p_i-1,\dots,p_j+1,\dots, p_r)\leq f(p_1,\dots, p_r) $. This violates the minimality of $f(p_1,\dots, p_r):=\sum_{i=1}^r\frac{1}{n-p_i}$. So the maximal Kirchhoff index is attained when $|p_i-p_j|\leq 1$, for $1\leq i,j\leq r$. In this regard let $f(p_1,\dots, p_r):=\sum_{i=1}^r\frac{1}{n-p_i}$ be minimal. In view of  the discussion above, for some integer $k$ we  assume that $p:=p_1=\dots=p_k$ and $p_{k+1}=\dots=p_r=p-1$. Since $ p_1+\dots+ p_r =n$, we see that $kp+(r-k)(p-1)$. So, $p=\lfloor\dfrac{n}{r}\rfloor+1$ and $ k=n-r\lfloor\dfrac{n}{r}\rfloor$. In this case we have $$\T{min}_{p_1+\dots+p_r=n}(\T{Kf}(K_{p_1,\dots,p_r}))=-1-r(n-1)+n(n-1)(\dfrac{k}{n-p}+\dfrac{r-k}{n-p+1}),$$
and the proof is complete.
\end{proof}
\begin{example}
Let $n=24$ and $r=7$. Then we see that $k=3$, $p=4$, so 
$$25.943\approx\T{Kf}(K_{4,4,4,3,3,3,3})\leq \T{Kf}(K{p_1,\dots,p_7})\leq 74= \T{Kf}(K_{18,1,1,1,1,1,1}).$$
\end{example}
\section{Degree Kirchhoff index}
Provided that the graph $G$ has no isolated vertices, the normalized Laplacian matrix is defined as $\mathcal{L}(G):=D^{\frac{-1}{2}}L(G)D^{\frac{-1}{2}},$ where $L (G)$ is the  standard Laplacian matrix of $G$. Motivated by the notion of  Kirchhoff index, Chen and Zhang  \cite{chen} defined  another graph invariant, denoted  by $\T{Kf}'(G)  $, which is called the {\it degree Kirchhoff index}.  
\begin{definition}\label{KF'}
The degree Kirchhoff index of the graph $G$ is defined as $$\T{Kf}'(G):=\sum_{i<j} d_id_j\Omega(i,j),$$ where $\Omega(i,j)$ is the resistance distance between vertices $i$ and $j$. 
\end{definition}

We refer to  \cite{Bozkurt, ASHRAFI, Liu2, Palacios, Palacios1} for some results about this topological index. 
\begin{lemma}\label{lm2.17}
For the graph $G $ with $m$ edges, $\T{Kf}'(G)=2m\T{trace}(\mathcal{L}^+(G))$. 
\end{lemma}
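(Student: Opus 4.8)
The plan is to mimic the structure of the two standard Kirchhoff-index identities in~(\ref{Kf}) but in the setting of the normalized Laplacian $\mathcal{L}(G)=D^{-1/2}L(G)D^{-1/2}$. First I would recall the eigen-description of $\mathcal{L}(G)$: since $G$ is connected and has no isolated vertices, the smallest normalized Laplacian eigenvalue is $\mu_n=0$ with eigenvector $D^{1/2}\mathbf 1$ (because $L(G)\mathbf 1=0$), and all other eigenvalues $\mu_1\ge\cdots\ge\mu_{n-1}>0$ are positive. The Moore--Penrose inverse $\mathcal{L}^+(G)$ is then the symmetric matrix with the same eigenvectors and eigenvalues $1/\mu_i$ for $i<n$ and $0$ for $i=n$, so that $\T{trace}(\mathcal{L}^+(G))=\sum_{i=1}^{n-1}1/\mu_i$.

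The key step is to express the resistance distance $\Omega(i,j)$ through $\mathcal{L}^+(G)$. I would use the standard fact $\Omega(i,j)=L^+(G)_{ii}+L^+(G)_{jj}-2L^+(G)_{ij}=(e_i-e_j)^\top L^+(G)(e_i-e_j)$, and then rewrite $L^+(G)$ in terms of $\mathcal{L}^+(G)$. From $\mathcal{L}(G)=D^{-1/2}L(G)D^{-1/2}$ one gets, on the orthogonal complement of the respective kernels, $L^+(G)=D^{-1/2}\mathcal{L}^+(G)D^{-1/2}$ — this identity must be checked carefully because Moore--Penrose inverses do not in general commute with such conjugations, but here it holds because $D^{1/2}$ maps $\ker\mathcal{L}(G)=\langle D^{1/2}\mathbf 1\rangle$ onto $\ker L(G)=\langle\mathbf 1\rangle$ and the ranges correspond accordingly, so the pseudo-inverse of the restricted bijections matches. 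Substituting gives $\Omega(i,j)=(e_i-e_j)^\top D^{-1/2}\mathcal{L}^+(G)D^{-1/2}(e_i-e_j)=\frac{1}{d_i}\mathcal{L}^+_{ii}+\frac{1}{d_j}\mathcal{L}^+_{jj}-\frac{2}{\sqrt{d_id_j}}\mathcal{L}^+_{ij}$.

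Then I would plug this into Definition~\ref{KF'}:
\begin{align*}
\T{Kf}'(G)&=\sum_{i<j}d_id_j\Omega(i,j)
=\sum_{i<j}\Bigl(d_j\mathcal{L}^+_{ii}+d_i\mathcal{L}^+_{jj}-2\sqrt{d_id_j}\,\mathcal{L}^+_{ij}\Bigr).
\end{align*}
Symmetrizing the first two terms over all ordered pairs gives $\sum_{i\ne j}d_j\mathcal{L}^+_{ii}=\sum_i\mathcal{L}^+_{ii}\sum_{j\ne i}d_j=\sum_i\mathcal{L}^+_{ii}(2m-d_i)$, while the cross term contributes $-\sum_{i\ne j}\sqrt{d_id_j}\,\mathcal{L}^+_{ij}$. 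Here I would use the orthogonality $\mathcal{L}^+(G)D^{1/2}\mathbf 1=0$, i.e.\ $\sum_j\sqrt{d_j}\,\mathcal{L}^+_{ij}=0$ for every $i$, which implies $\sum_{j\ne i}\sqrt{d_j}\,\mathcal{L}^+_{ij}=-\sqrt{d_i}\,\mathcal{L}^+_{ii}$. Multiplying by $\sqrt{d_i}$ and summing over $i$ collapses the double sum to $-\sum_i d_i\mathcal{L}^+_{ii}$, and similarly $\sum_i d_i\mathcal{L}^+_{ii}$ cancels the $-\sum_i d_i\mathcal{L}^+_{ii}$ coming from the $(2m-d_i)$ expansion, leaving $\T{Kf}'(G)=2m\sum_i\mathcal{L}^+_{ii}=2m\,\T{trace}(\mathcal{L}^+(G))$.

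The main obstacle I anticipate is the justification of $L^+(G)=D^{-1/2}\mathcal{L}^+(G)D^{-1/2}$ and the associated kernel bookkeeping; once that conjugation identity and the single relation $\mathcal{L}^+(G)D^{1/2}\mathbf 1=0$ are in hand, the rest is the bookkeeping of the double sums, which is routine. An alternative, essentially equivalent route avoids $L^+$ entirely: work directly with $\mathcal{L}^+(G)$ via the known formula $\Omega(i,j)=\frac{1}{d_i}\mathcal{L}^+_{ii}+\frac{1}{d_j}\mathcal{L}^+_{jj}-\frac{2}{\sqrt{d_id_j}}\mathcal{L}^+_{ij}$ (derivable from the spectral decomposition of $\mathcal{L}(G)$ and the fact that $\Omega$ is the effective resistance of the electrical network with conductances equal to edge weights), and then run the same symmetrization; I would present whichever is cleaner in the paper's notation.
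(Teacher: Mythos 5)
Your argument reaches the correct identity, but it takes a genuinely different and much longer route than the paper. The paper's proof is essentially a two-line citation: it invokes Chen and Zhang's Theorem 2.5 from \cite{chen}, which already states $\T{Kf}'(G)=2m\sum_{i=1}^{n-1}\frac{1}{\mu_i}$ for the normalized Laplacian eigenvalues $\mu_1\geq\dots\geq\mu_{n-1}>\mu_n=0$, and then observes that $\sum_{i=1}^{n-1}\frac{1}{\mu_i}=\T{trace}(\mathcal{L}^+(G))$ because the trace of a Moore--Penrose inverse of a symmetric matrix is the sum of reciprocals of its nonzero eigenvalues. What you propose is, in effect, a self-contained re-proof of that cited theorem starting from Definition \ref{KF'}: express $\Omega(i,j)$ through entries of $\mathcal{L}^+(G)$, then symmetrize the double sum using $\mathcal{L}^+(G)D^{1/2}\mathbf 1=0$. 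Your summation bookkeeping is correct: the diagonal terms give $\sum_i(2m-d_i)\mathcal{L}^+_{ii}$, the off-diagonal terms collapse to $+\sum_i d_i\mathcal{L}^+_{ii}$, and the total is $2m\,\T{trace}(\mathcal{L}^+(G))$. Your version buys independence from the external reference at the cost of length; the paper's buys brevity at the cost of self-containment.

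One intermediate claim in your write-up is actually false as stated: $L^+(G)=D^{-1/2}\mathcal{L}^+(G)D^{-1/2}$ does not hold unless $G$ is regular. Writing $M:=D^{-1/2}\mathcal{L}^+(G)D^{-1/2}$, one checks $LML=L$ and $MLM=M$, but $LM=I-\frac{1}{2m}D\mathbf 1\mathbf 1^{\top}$ is not symmetric for non-regular $G$, and $\ker M=\langle D\mathbf 1\rangle\neq\langle\mathbf 1\rangle=\ker L^+(G)$; so $M$ is only a reflexive generalized inverse of $L(G)$, not its Moore--Penrose inverse, and your kernel-matching justification does not go through. The step is nonetheless salvageable, and your final formula $\Omega(i,j)=\frac{1}{d_i}\mathcal{L}^+_{ii}+\frac{1}{d_j}\mathcal{L}^+_{jj}-\frac{2}{\sqrt{d_id_j}}\mathcal{L}^+_{ij}$ is correct, because $e_i-e_j$ lies in the column space of $L(G)$ and the quadratic form $x^{\top}L^{-}x$ is independent of the choice of generalized inverse $L^{-}$ for such $x$; so $(e_i-e_j)^{\top}M(e_i-e_j)=(e_i-e_j)^{\top}L^+(G)(e_i-e_j)=\Omega(i,j)$. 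Replace the false conjugation identity with this g-inverse invariance argument (or simply cite the formula from \cite{chen}, which is what the paper implicitly does) and your proof is complete.
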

\begin{proof}
It is known that  if $\lambda_1,\dots,\lambda_t$ are the nonzero eigenvalues of a given matrix $A$, then $\T{trace}(A^+)=\frac{1}{\lambda_1}+\dots +\frac{1}{\lambda_t}$. By \cite[Theorem 2.5]{chen}, we  see that if $\mu_1\geq\dots \geq\mu_{n-1}>\mu_n=0$ are the eigenvalues of $ \mathcal{L}(G) $, then  $\T{Kf}'(G)=2m\sum_{i=1}^{n-1}\frac{1}{\mu_i}$. Thus, Definition \ref{KF'} gives $\T{Kf}'(G)=2m\T{trace}(\mathcal{L}^+(G))$.
\end{proof}
We proceed to obtain the degree Kirchhoff index of complete multipartite graphs.
\begin{theorem}\label{tm2.19}
Let $K_{p_1,\dots,p_r}$ be a complete $r$-partite graph with $n$ vertices and $m$ edges. Then, $$\T{Kf}'(K_{p_1,\dots,p_r})=-(\dfrac{2m}{n})^2+2m(n-1).$$
\end{theorem}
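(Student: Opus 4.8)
The plan is to mimic the alternative computation of the ordinary Kirchhoff index in equation (\ref{2}), but working with the normalized Laplacian instead. By Lemma \ref{lm2.17} it suffices to compute $2m\,\T{trace}(\mathcal{L}^+(K_{p_1,\dots,p_r}))$. The key structural fact is that a complete $r$-partite graph is biregular on each part: every vertex of $V_i$ has degree $d_i = n - p_i$. Hence the degree matrix is $D = \oplus_{i=1}^r (n-p_i) I_{p_i}$, which is constant on each block, and this is what makes $\mathcal{L} = D^{-1/2} L D^{-1/2}$ tractable.

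First I would write $L = \oplus_{i=1}^r\big((n-p_i)I_{p_i} + J_{p_i}\big) - J_n$ as in the proof of Theorem \ref{tm1}. Conjugating by $D^{-1/2}$ and using that $D^{-1/2}$ acts as the scalar $(n-p_i)^{-1/2}$ on the $i$-th block, I get
\[
\mathcal{L} = I_n + \oplus_{i=1}^r \frac{1}{n-p_i}J_{p_i} - D^{-1/2}J_n D^{-1/2}.
\]
The vector $w$ with $w_j = \sqrt{n-p_i}$ for $j \in V_i$ satisfies $D^{-1/2}J_n D^{-1/2} = \frac{1}{?}ww^{\T T}$ up to a scalar; more precisely $D^{-1/2}J_nD^{-1/2} = uu^{\T T}$ where $u_j = (n-p_i)^{-1/2}$ on $V_i$. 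Rather than diagonalize directly, I would follow the cleaner route used for (\ref{2}): compute $\mathcal{L} + \beta\, vv^{\T T}$ for a suitable rank-one correction that makes it invertible, invert it using Remark \ref{rem1}-style identities (Sherman–Morrison on each block plus one global rank-one update), and then subtract off the contribution of the kernel. The kernel of $\mathcal{L}$ is spanned by $D^{1/2}\mathbf{1}$ (since $L\mathbf{1}=0$), so the Moore–Penrose inverse is the inverse of $\mathcal{L}$ restricted to the orthogonal complement of $D^{1/2}\mathbf{1}$.

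The most efficient path is probably via eigenvalues: the eigenvalues of $\mathcal{L}(G)$ for any graph are $1 - \lambda$ as $\lambda$ ranges over eigenvalues of the adjacency-type normalized operator, but here it is faster to note that on each block $V_i$, the subspace of vectors supported on $V_i$ and summing to zero is an eigenspace. On such vectors $J_{p_i}$ acts as $0$ and $J_n$ acts as $0$, so $\mathcal{L}$ acts as $I_{p_i} + \frac{1}{n-p_i}\cdot 0 = I$; wait — I must be careful, since $\oplus \frac{1}{n-p_i}J_{p_i}$ kills zero-sum vectors, giving eigenvalue $1$ with multiplicity $\sum_i (p_i - 1) = n - r$. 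That already accounts for $n-r$ of the $n-1$ nonzero eigenvalues, each contributing $1$ to the trace of the pseudoinverse. The remaining $r-1$ nonzero eigenvalues (plus the zero eigenvalue) come from the $r$-dimensional space spanned by the block-indicator vectors $D^{1/2}$-weighted; I would compute the quotient matrix of $\mathcal{L}$ on this $r$-dimensional invariant subspace (an $r \times r$ matrix, explicitly a scalar-plus-rank-one matrix after the right normalization), find its $r-1$ nonzero eigenvalues or — better — directly compute $\sum 1/\mu$ over them as $\T{trace}$ of the pseudoinverse of that small matrix. Summing: $\T{trace}(\mathcal{L}^+) = (n-r) + (\text{contribution of the } r\times r \text{ block})$, and then multiply by $2m$ and simplify, using $2m = \sum_i p_i(n-p_i) = n^2 - \sum_i p_i^2$ and $\sum_i p_i = n$, until it collapses to $-(2m/n)^2 + 2m(n-1)$.

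\textbf{Main obstacle.} The routine linear algebra (block Sherman–Morrison, or diagonalizing the $r\times r$ quotient) is not hard, but the genuine friction is the final simplification: after computing the $r-1$ eigenvalues on the $r$-dimensional subspace one gets an expression involving $\sum_i p_i^2/(n-p_i)$ or similar weighted sums, and it is not obvious a priori that everything telescopes into a closed form depending only on $m$ and $n$ and not on the individual $p_i$. I expect the cleanest way to see the cancellation is to avoid computing individual eigenvalues altogether: instead observe that $\mathcal{L} = I_n - \frac{1}{?}$(something) and use $\T{trace}(\mathcal{L}^+) = \T{trace}\big((\mathcal{L} + c\,uu^{\T T})^{-1}\big) - \frac{1}{c\|u\|^2}$ for the right rank-one patch $u \propto D^{1/2}\mathbf 1$, so that $\mathcal{L} + c\,uu^{\T T}$ becomes $I_n + (\text{block-diagonal positive}) $ whose inverse has an easily computed trace. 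Getting that rank-one bookkeeping exactly right — the constant $c$, the norm $\|u\|^2 = \sum_i p_i(n-p_i) = 2m$, and the scalar that appears — is where I would concentrate the care, and I anticipate the identity $\|D^{1/2}\mathbf 1\|^2 = 2m$ together with $\mathbf 1^{\T T} L \mathbf 1 = 0$ being exactly what forces the answer into the stated two-term form.
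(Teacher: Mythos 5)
Your outline stops precisely at the step you yourself flag as the main obstacle, and that obstacle is not a bookkeeping issue: it is fatal. If you carry your quotient-matrix computation through, the $n-r$ eigenvalues equal to $1$ contribute $n-r$ to $\T{trace}(\mathcal{L}^+)$, and the remaining contribution is $\T{trace}(R^+)$ for the $r\times r$ matrix $R=I_r+\T{diag}\bigl(p_1/(n-p_1),\dots,p_r/(n-p_r)\bigr)-uu^{\T{T}}$ with $u_i=\sqrt{p_i/(n-p_i)}$; this quantity genuinely depends on the individual $p_i$ and does not collapse to an expression in $m$ and $n$ alone. Concretely, for $K_{1,2}$ one computes directly from Definition \ref{KF'} that $\T{Kf}'=2\cdot1\cdot1+2\cdot1\cdot1+1\cdot1\cdot2=6$, whereas the stated formula gives $-(4/3)^2+4\cdot2=56/9$; likewise $K_{3,3}$ and $K_{1,1,4}$ both have $n=6$ and $m=9$, yet $\T{Kf}'(K_{3,3})=81$ while $\T{Kf}'(K_{1,1,4})=79$. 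So the theorem as stated fails for non-regular complete multipartite graphs, and no argument along your (otherwise sound) spectral lines can reach it.

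The paper arrives at the formula by a route you did not take: it sets $\mathcal{L}^+=D^{1/2}L^{+}D^{1/2}$ and traces the right-hand side using the explicit diagonal of $L^+$ from Proposition \ref{prop1}. That identity is the source of the discrepancy: the matrix $M:=D^{1/2}L^{+}D^{1/2}$ satisfies $\mathcal{L}M\mathcal{L}=\mathcal{L}$ and $M\mathcal{L}M=M$, but $\mathcal{L}M=D^{-1/2}(I-\tfrac1nJ)D^{1/2}$ is not symmetric unless the graph is regular, so $M$ is not the Moore--Penrose inverse of $\mathcal{L}$ and $\T{trace}(M)$ is not $\sum_i 1/\mu_i$; hence Lemma \ref{lm2.17} cannot be applied to it. Your decomposition (eigenvalue $1$ with multiplicity $n-r$, plus the $r\times r$ quotient, with $\lVert D^{1/2}\mathbf{1}\rVert^2=2m$ spanning the kernel) is the correct way to evaluate $\T{Kf}'$ here --- it simply proves a different, and correct, formula rather than the one stated.
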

\begin{proof}
By Lemma \ref{lm2.17} , we need only to find $\T{trace}(\mathcal{L}^+(K_{p_1,\dots,p_r}))$. Since  $\mathcal{L}(K_{p_1,\dots,p_r})=D^{\frac{-1}{2}}L(K_{p_1,\dots,p_r})D^{\frac{-1}{2}}$  we get by  Proposition \ref{prop1} 
$$\mathcal{L}^+(K_{p_1,\dots,p_r})=D^{\frac{1}{2}}L^{+}(K_{p_1,\dots,p_r})D^{\frac{1}{2}}.$$
Let $P_i$  denote part of size $p_i$ in $ K_{p_1,\dots,p_r} $.  Thus, 
\begin{align*}
\T{trace}(\mathcal{L}^+(K_{p_1,\dots,p_r}))=&\sum_{i=1}^r\sum_{v\in P_i}d_iL^{+}(K_{p_1,\dots,p_r})_{vv}\\
=&\sum_{i=1}^rp_id_i(-\dfrac{1}{n^2}+\T{trace}((nI-L(K_{p_i}))^{-1})))\\
=&\sum_{i=1}^rp_id_i(-\dfrac{1}{n^2}+\T{trace}(\frac{1}{n-p_i}I-\frac{1}{n(n-p_i)}J)))\\
=&\sum_{i=1}^rp_id_i(-\dfrac{1}{n^2}+\frac{1}{n-p_i}-\frac{1}{n(n-p_i)}).
\end{align*}
Since $d_i=n-p_i$  after simplifying we get $ \T{trace}(\mathcal{L}^+(K_{p_1,\dots,p_r}))=-\dfrac{2m}{n^2}+n-1 $ where $m$ is the number of edges. So
  $\T{Kf}'(K_{p_1,\dots,p_r})=-(\dfrac{2m}{n})^2+2m(n-1)$ as desired.
\end{proof}
\begin{example}
Cocktail party graph is the $r$-partite graph $K_{2,\dots,2}$. So $n=2r$ and $m=2r(r-1)$. Thus we have $\T{Kf}'(K_{2,\dots,2})=4(2r^3-4r^2+3r-1)$.
\end{example}
\begin{corollary} \label{cor1}
$ \T{Kf}'(K_{p_1,\dots,p_r}) $ is an increasing  function on number of edges. Also we have 
$$\T{Kf}'(K_{n-r+1,1^{r-1}})\leq\T{Kf}'(K_{p_1,\dots,p_r})\leq\T{Kf}'(K_{p^k,(p-1)^{r-k}})$$
where $p=\lfloor\dfrac{n}{r}\rfloor+1$ and $ k=n-r\lfloor\dfrac{n}{r}\rfloor$. 
\end{corollary}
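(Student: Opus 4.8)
The plan is to lean on Theorem~\ref{tm2.19}, which shows that for a complete $r$-partite graph on $n$ vertices the invariant $\T{Kf}'$ depends on the graph \emph{only} through $n$ and the number of edges $m$. Writing $g(m):=-\left(\frac{2m}{n}\right)^2+2m(n-1)$, we have $\T{Kf}'(K_{p_1,\dots,p_r})=g(m)$, so the first claim is just the statement that, for fixed $n$, $g$ is increasing on the set of edge counts attainable by complete $r$-partite graphs. Since $g$ is a concave quadratic in $m$ with vertex at $m^\star=\frac{n^2(n-1)}{4}$, it suffices to observe that every complete $r$-partite graph on $n$ vertices has $m\le\binom{n}{2}=\frac{n(n-1)}{2}\le m^\star$ (the last inequality holding for $n\ge2$); hence $g'(m)=-\frac{8m}{n^2}+2(n-1)\ge0$ throughout the feasible range, strictly for $n\ge3$. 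This gives monotonicity.

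For the two–sided bound I would first record the identity $m=\frac12\bigl(n^2-\sum_{i=1}^r p_i^2\bigr)$, which follows from $\sum_{i\ne j}p_ip_j=(\sum_i p_i)^2-\sum_i p_i^2$ and $\sum_i p_i=n$. Thus among all complete $r$-partite graphs on $n$ vertices, $m$ is largest precisely when $\sum p_i^2$ is smallest, and smallest precisely when $\sum p_i^2$ is largest. Combined with the monotonicity of $g$, the corollary reduces to identifying the partitions of $n$ into $r$ parts that minimise and maximise $\sum p_i^2$.

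Both are handled by an elementary transfer argument, in the same spirit as the use of Lemma~\ref{lm2.2} in Section~4. If two part sizes satisfy $p_i\ge p_j+2$, then replacing $(p_i,p_j)$ by $(p_i-1,p_j+1)$ changes $\sum p^2$ by $2(p_j-p_i)+2\le-2<0$; iterating terminates (the value is a strictly decreasing positive integer) at the unique partition with all parts within $1$ of each other, namely $p^k,(p-1)^{r-k}$ with $p=\lfloor n/r\rfloor+1$ and $k=n-r\lfloor n/r\rfloor$, which is therefore the minimiser. Conversely, if $p_i\ge p_j\ge2$, then replacing $(p_i,p_j)$ by $(p_i+p_j-1,1)$ changes $\sum p^2$ by $2(p_i-1)(p_j-1)\ge2>0$; iterating reduces the number of parts of size $\ge2$ and collapses to $(n-r+1),1^{r-1}$, the unique maximiser. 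Feeding these two partitions back through $g$ (equivalently, the formula of Theorem~\ref{tm2.19}) yields the displayed inequalities.

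There is no serious obstacle here; the only point demanding a little care is verifying that the vertex $m^\star$ of the parabola lies at or beyond the largest attainable edge count $\binom{n}{2}$, so that $g$ is monotone on the entire feasible domain rather than merely on an initial segment. The rest is the bookkeeping of the two swapping arguments, which mirrors the extremal analysis already carried out in Section~4.
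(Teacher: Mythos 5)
Your proposal is correct and follows essentially the same route as the paper: apply Theorem~\ref{tm2.19} to reduce everything to a concave quadratic in $m$ that is increasing on the feasible range of edge counts, then use $2m=n^2-\sum_i p_i^2$ to identify the extremal partitions. The only difference is that you explicitly carry out the transfer arguments locating the minimiser and maximiser of $\sum_i p_i^2$, a verification the paper leaves to the reader.
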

\begin{proof}
In view of Theorem \ref{tm2.19} we have $ \T{Kf}'(K_{p_1,\dots,p_r})=\dfrac{-1}{n^2}(2m)^2+2m(n-1)$. Since the function  $f(x)=\dfrac{-1}{n^2}x^2+x(n-1)$ is increasing in the interval $ (0, \frac{1}{2}n^2(n-1)]$, and that $2m\leq \frac{1}{2}n^2(n-1)$, we deduce that $ \T{Kf}'(K_{p_1,\dots,p_r}) $ is an increasing  function on number of edges. Furthermore, since $2m=n^2-\sum_{i=1}^rp_i^2$, we can verify that among all complete $r$-partite graphs with $n$ vertices the graph  $ K_{n-r+1,1^{r-1}} $ has the least number of edges and the graph $K_{p^k,(p-1)^{r-k}}$ attains the maximum  number of edges where $p=\lfloor\dfrac{n}{r}\rfloor+1$ and $ k=n-r\lfloor\dfrac{n}{r}\rfloor$. 
\end{proof}
As  observed in Corollary \ref{cor1}, degree Kirchhoff index of complete multipartite graphs can be interpreted as a monotone function of the number of edges. But in the ordinary Kirchhof index  we do not observe such a monotonicity,
see Table \ref{tab1} and Table \ref{tab2}:
\begin{table}[th]
$$ \begin {array}{r|rrrrrrrr} 
n=9, r=3&K_{1, 1, 7}& K_{1, 2, 6}& K_{1, 3, 5}& K_{1, 4, 4}& K_{2, 2, 5}& K_{2, 3, 4}&K_{3, 3, 3}\\
\hline
m=|E|&15& 20& 23& 24& 24& 26& 27\\
\T{Kf'}&228.89\nearrow& 300.25\nearrow& 341.88\nearrow& 355.56\nearrow& 355.56\nearrow& 382.62\nearrow& 396\\
\T{Kf}&29\searrow& 18.286\searrow& 14\searrow& 12.800\nearrow& 13.571\searrow& 11.686\searrow& 11\\
\end {array}
$$
\caption{All complete $3$-partite graphs with $9$ vertices}
\label{tab1}
\end{table}
\begin{table}[th]
{\Tiny
$$
  \begin {array}{lllllllllll} 
K_{1^8, 7}& K_{1^7, 2, 6}& K_{1^7, 3, 5}& K_{1^7, 4, 4}& K_{1^6,2^2, 5}& K_{1^6,2,3,4}&K_{1^6,3^3}&K_{1^5 ,2^3, 4}&K_{1^5 ,2^2, 3^ 3}&K_{1^4,2^4, 3}&K_{1^3,2^6}\\
\hline
84& 89&92& 93& 93& 95&96& 96& 97& 98& 99\\
 2226.6\nearrow& 2351.2\nearrow& 2425.5\nearrow& 2450.2\nearrow& 2450.2\nearrow &2499.6\nearrow & 2524.2\nearrow & 2524.2\nearrow & 2548.7\nearrow & 2573.3\nearrow& 2597.8\\ 
 19.250\searrow& 17.487\searrow& 16.5\searrow & 16.182\nearrow & 16.308\searrow & 15.745\searrow & 15.500\nearrow & 15.552\searrow & 15.308\searrow & 15.115\searrow & 14.923\\
 \end {array} 
$$}
\caption{All complete $9$-partite graphs with $15$ vertices}
\label{tab2}
\end{table}
\section*{Acknowledgments}
The third author  was partially supported by National Natural Science Foundation of China grant No. 11601006.

\end{document}